\numberwithin{equation}{section}  
\DeclareMathAlphabet{\curly}{U}{rsfs}{m}{n}  
\theoremstyle{remark}
\theoremstyle{plain}
\newtheorem{lem}{Lemma}[section]
\newtheorem{thm}{Theorem}
\numberwithin{equation}{section}
\newcommand{\NN}{{\mathbb N}}
\newcommand{\E}{\mathbf E}
\renewcommand{\pmod}[1]{\allowbreak\mkern7mu({\operator@font mod}\,\,#1)}
\newcommand{\bal}{\[\begin{aligned}}
\newcommand{\eal}{\end{aligned}\]}
\newcommand{\be}{\begin{equation}}
\newcommand{\ee}{\end{equation}}
\newcommand{\lam}{\ensuremath{\lambda}}
\renewcommand{\a}{\ensuremath{\alpha}}
\renewcommand{\b}{\ensuremath{\beta}}
\newcommand{\eps}{\ensuremath{\varepsilon}}
\renewcommand{\le}{\leqslant}
\renewcommand{\ge}{\geqslant}
\newcommand{\fl}[1]{{\ensuremath{\left\lfloor {#1} \right\rfloor}}}
\newcommand{\order}{\asymp}      
\renewcommand{\(}{\left(}
\renewcommand{\)}{\right)}
\newcommand{\pfrac}[2]{\left(\frac{#1}{#2}\right)}  
\begin{document}

\title{Extremal properties of product sets}
\author{Kevin Ford}
\date{}
\address{Department of Mathematics, 1409 West Green Street, University
of Illinois at Urbana-Champaign, Urbana, IL 61801, USA}
\email{ford@math.uiuc.edu}

\dedicatory{To Sergei Vladimirovich Konyagin on the occasion of his 60th 
birthday}

\begin{abstract}
We find the nearly optimal size of a set $A\subset [N] := \{1,\ldots,N\}$
so that the product set $AA$ satisfies either (i) $|AA| \sim |A|^2/2$
or (ii) $|AA| \sim |[N][N]|$.  This settles problems recently posed in a paper of 
 Cilleruelo, Ramana and Ramar\'e.
\end{abstract}

\footnote{Research of the author supported in part by individual NSF grant DMS-1501982.
Some of this work was carried out at MSRI, Berkeley during the Spring semester of 2017, partially supported by NSF grant DMS-1440140.
}

\maketitle


{\Large \section{Introduction}}

For $A,B\subset \NN$ let $AB$ denote the product set $\{ab: a\in A,b\in B\}$.
In the special case $[N]=\{1,2,3\ldots,N\}$, denote by $M_N=|[N][N]|$ the number
of distinct products in an $N$ by $N$ multiplication table.
In a recent paper \cite{CRR} of Cilleruelo, Ramana and Ramar\'e (see also
Problems 15,16 in \cite{INTEGERS}), the following problems were posed:
\begin{enumerate}
\item{} \cite[Problem 1.2]{CRR}.  If $A\subset [N]$ and $|AA|\sim |A|^2/2$, is 
$|A|=o(N/\log^{1/2} N)?$
\item{} \cite[Problem 1.4]{CRR}.  If $A\subset [N]$ and $|AA|\sim M_N$,
is $|A| \sim N$ ?
\end{enumerate}

In this note, we answer both questions in the negative.  Our results are based on a careful analysis of the structure of $[N][N]$
developed in \cite{Ford-Hxyz} and \cite{Ford-Hxy2y}.
Let
\be\label{theta}
\theta=\frac12-\frac{1+\log\log 2}{\log 4} = 1 - \frac{1+\log\log 4}{\log 4} = 0.04303566\ldots
\ee
From \cite{Ford-Hxyz}, we have
\be\label{NN}
M_N \order \frac{N^2}{(\log N)^{2\theta} (\log\log N)^{3/2}}.
\ee
In light of the elementary inequalities $|AA| \le \min(|A|^2,M_N)$, it
follows that if $|AA|\sim \frac12 |A|^2$, then $|A|$ cannot be of order
larger than $M_N^{1/2}$, and if $|AA| \sim M_N$, then $|A|$ cannot have order
of growth smaller than $M_N^{1/2}$.  As we shall see, $M_N^{1/2}$ turns out to be
close the
the threshhold value of $|A|$ for each of these properties to hold.

\begin{thm}\label{Prob1.2thresh}
Let $D>7/2$.
For each $N\ge 10$ there is a set $A\subset [N]$ of size
\[
|A| \ge \frac{N}{(\log N)^\theta (\log\log N)^D},
\]
for which $|AA| \sim |A|^2/2$ as $N\to \infty$.
\end{thm}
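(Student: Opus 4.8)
The plan is to convert the condition $|AA|\sim\tfrac12|A|^2$ into an upper bound for multiplicative quadruples in $A$, and then to build $A$ as a carefully thinned subset of an interval, using the fine structure of $[N][N]$ to control those quadruples. Put $r(n)=\#\{\{a,b\}:a,b\in A,\ ab=n\}$; then $\sum_n r(n)=\binom{|A|+1}{2}\sim\tfrac12|A|^2$ and $|AA|=\sum_n r(n)-\sum_{n\in AA}(r(n)-1)$, while $r-1\le r(r-1)$ for every integer $r\ge 1$, so $\sum_{n\in AA}(r(n)-1)\le\sum_n r(n)(r(n)-1)$. Hence $|AA|\sim\tfrac12|A|^2$ follows once we know that the number of \emph{genuine collisions} --- ordered pairs of distinct unordered factorisations $\{a,b\}\ne\{c,d\}$ with $ab=cd$ and $a,b,c,d\in A$ --- is $o(|A|^2)$. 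A short observation then simplifies matters: in a genuine collision the four numbers $a,b,c,d$ are automatically \emph{distinct}, since if two of them coincided then cancelling in $ab=cd$ would force $\{a,b\}=\{c,d\}$. So it suffices to produce $A\subset(N/2,N]$ with $|A|\ge N(\log N)^{-\theta}(\log\log N)^{-D}$ --- a size consistent with the necessary bound $|A|\lesssim M_N^{1/2}$ --- and
\[
\mathcal Q(A):=\#\{(a,b,c,d)\in A^4:\ ab=cd,\ a,b,c,d\ \text{distinct}\}=o(|A|^2).
\]

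Next one unpacks such a quadruple. Writing $k=\gcd(a,c)$, $a=ku$, $c=kv$ with $\gcd(u,v)=1$ and $u\ne v$, the relation $ab=cd$ forces $b=vw$, $d=uw$; and membership in $(N/2,N]$ forces $u\asymp v$ (say of dyadic size $U$, with $u,v\ge 2$) and $k\asymp w\asymp N/U$. Thus
\[
\mathcal Q(A)\asymp\sum_{U\ \text{dyadic}}\#\{(k,u,v,w):u,v\asymp U,\ \gcd(u,v)=1,\ u\ne v,\ k,w\asymp N/U,\ ku,kv,vw,uw\in A\}.
\]
When $A=(N/2,N]$ each dyadic scale contributes $\asymp N^2$, so $\mathcal Q\asymp N^2\log N\asymp|A|^2\log N$; and for a \emph{random} subset of density $\delta$ one gets $\mathcal Q(A)\asymp\delta^4N^2\log N$, which is $o(\delta^2N^2)=o(|A|^2)$ only when $\delta=o((\log N)^{-1/2})$, i.e.\ only for $|A|\asymp N/\sqrt{\log N}$ --- precisely the size already present in \cite{CRR}. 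The heart of the matter is therefore to choose $A$ with so much multiplicative structure that, at every dyadic scale $U$, the four membership events $ku,kv,vw,uw\in A$ become strongly anti-correlated, driving $\mathcal Q(A)$ a further factor $\asymp(\log N)^{1-2\theta}$ below the random heuristic --- exactly the gain that upgrades $\sqrt{\log N}$ to $(\log N)^{\theta}$ in the denominator of $|A|$.

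This is where the analysis of $[N][N]$ in \cite{Ford-Hxyz,Ford-Hxy2y} is essential. Each quadruple above is recorded by an integer $n=ab\in(N^2/4,N^2]$ carrying two commensurable divisors $ku\asymp kv$ with prescribed cofactors --- a ``clustered divisor'' configuration of exactly the type governed by the $H(x,y,z)$ structure theorems, whose large--deviation rate is the source of the exponent $\theta$ in \eqref{theta}--\eqref{NN}. Accordingly I would take $A$ to consist of those $n\in(N/2,N]$ whose prime factorisation is ``spread out'' in the precise sense dictated by those theorems --- loosely, $n$ carries no block of prime factors able to cluster into two commensurable divisors in the forbidden range --- the density of this condition being, by the $H(x,y,z)$ estimates, of order $(\log N)^{-\theta}$ up to powers of $\log\log N$, which fixes the claimed order of $|A|$. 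For such $A$ one argues that a collision $ab=cd$ with all of $a,b,c,d$ spread out would force $n=ab$ itself to contain a forbidden cluster (inherited from the near-coincident divisors $ku,kv$), so the surviving quadruples are confined to a thin exceptional set and $\mathcal Q(A)=o(|A|^2)$.

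The main obstacle is to make that last step quantitative: proving that the ``spread out'' condition really does eliminate all but $o(|A|^2)$ of the quadruples. This requires running the divisor-concentration machinery of \cite{Ford-Hxyz}, together with its uniform refinements in \cite{Ford-Hxy2y}, not over all of $[N][N]$ but over the products $ab$ with $a,b$ restricted to the spread-out family, and then summing the resulting bounds over every dyadic scale $U$ --- the several powers of $\log\log$ lost at each step of the $H(x,y,z)$ analysis are what force the hypothesis $D>7/2$. A secondary, more routine task is to verify, via Sathe--Selberg/Mertens-type counts, that the spread-out condition still admits $\gg N(\log N)^{-\theta}(\log\log N)^{-D}$ integers in $(N/2,N]$, and to dispose of the diagonal and near-diagonal degenerate configurations (where some of $ku,kv,vw,uw$ coincide, or $u\asymp v$ fails to be genuine) separately, these being of lower order.
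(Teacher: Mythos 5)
Your overall skeleton matches the paper's up to a point: reduce $|AA|\sim\tfrac12|A|^2$ to showing that nontrivial multiplicative quadruples are $o(|A|^2)$, parametrize them by $a=ku$, $c=kv$, $b=vw$, $d=uw$, and use integers whose prime factors are ``spread out'' (about $\tfrac{\log\log N}{\log 4}$ primes, not compressed at the bottom) to beat the random-set heuristic by the factor $(\log N)^{1-2\theta}$. This is exactly the role of the set $B$ and the energy bound in the paper. The genuine gap is your final step: the claim that the deterministic spread-out set itself satisfies $\mathcal Q(A)=o(|A|^2)$ is false at density $(\log N)^{-\theta}(\log\log N)^{-O(1)}$. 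Consider quadruples of the form $a=11k$, $c=13k$, $b=13w$, $d=11w$ with $k,w$ ranging over spread-out integers of the appropriate size and prime count: all four elements are exactly as ``spread out'' as generic members of your set, yet these collisions alone number $\gg|A|^2$ (the choices of $k$ and $w$ are independent, each of the full density of the set). No condition imposed on the factorization of \emph{individual} elements of $A$ can exclude them, because the two close divisors $a,c$ of $n=ab$ are created jointly by the tiny cofactors $u=11,v=13$ and the shared block $k$, not by clustering inside any single element. Nor does the $H(x,y,z)$ rarity of integers $n\le N^2$ with two commensurable divisors rescue the argument: the products $ab$ with $a,b\in A$ form a sparse multiset in $[N^2]$, so rarity in $[N^2]$ gives no bound on the number of colliding pairs; making that quantitative is precisely the energy estimate, and even for the structured set the paper only obtains $E(B)\ll|B|^2(\log\log N)^4$, which is far from $o(|B|^2)$.

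The missing ingredient is the paper's third step: after building the structured set $B$ (of size $\gg N(\log N)^{-\theta}(\log\log N)^{-3/2}$, via Ford--Smirnov-type counts) and proving $E(B)\ll|B|^2(\log\log N)^4$ by the parametric (Rankin-weight) method applied to the gcd decomposition, one passes to a \emph{random} subset $A\subset B$ of density $\rho\approx(\log\log N)^{-2-o(1)}$, as in Proposition 3.2 of \cite{CRR}. The thinning multiplies the quadruple count by $\rho^4$ but the pair count only by $\rho^2$, so nontrivial quadruples become $o(|A|^2)$ even though they were $\gg|B|^2$ before thinning; this is also the true source of the exponent $7/2=3/2+2$ (and hence of the hypothesis $D>7/2$), rather than accumulated $\log\log$ losses in an $H(x,y,z)$ analysis as you surmise. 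A small further slip: in a genuine collision the four numbers need not be distinct, since $a=b$ is possible (e.g. $6\cdot 6=4\cdot 9$); these degenerate quadruples are easily negligible, but the claim as stated is not correct. Without the random-thinning step (or some substitute for it), your construction cannot reach $|AA|\sim|A|^2/2$ at the claimed size.
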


Consequently, the largest size $T_N(\eps)$ of a set $A$
with $|AA| \ge (1-\eps) |A|^2/2$  satisfies 
\[
\frac{N}{(\log N)^\theta (\log\log N)^{7/2+o(1)}} \ll T_N(\eps) \ll \frac{N}{(\log N)^\theta (\log\log N)^{3/4}}.
\]

\begin{thm}\label{Prob1.4thresh}
For each $N\ge 10$ there is a set $A\subset [N]$ of size
\[
|A| \le \frac{N}{(\log N)^{\theta}} \exp \left\{ (2/3) \sqrt{\log\log N\log\log\log N} \right\},
\]
for which $|AA| \sim M_N$ as $N\to \infty$.
\end{thm}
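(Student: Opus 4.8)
The plan is to build $A$ directly from the structure theory of $[N][N]$ developed in \cite{Ford-Hxyz} and \cite{Ford-Hxy2y}, taking $A$ to be, essentially, the set of all integers in $[N]$ that can occur as a divisor near $\sqrt n$ of a ``typical'' $n\in[N][N]$. Since $A\subseteq[N]$ forces $AA\subseteq[N][N]$, and hence $|AA|\le M_N$ automatically, the entire task reduces to exhibiting such an $A$ of the stated size for which $AA$ omits only $o(M_N)$ elements of $[N][N]$; then $|AA|\sim M_N$.

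Concretely, I would first extract from \cite{Ford-Hxyz} a family $\mathcal F\subseteq\NN$, specified by explicit conditions on the prime factorization --- roughly that $\Omega(m)$ (the number of prime factors of $m$, counted with multiplicity) lies in a window around $\tfrac{\log\log N}{\log 4}$ and that these prime factors are suitably spread out --- together with a subset $\mathcal G\subseteq[N][N]$ satisfying $|\mathcal G|\sim M_N$ and such that every $n\in\mathcal G$ factors as $n=ab$ with $a\le b\le N$ and both $a,b\in\mathcal F$. The choice of window is essentially dictated by the structure theory: the integers $n\le N^2$ carrying the bulk of $M_N$ have $\Omega(n)$ near $\tfrac{\log\log N}{\log 2}$ with spread-out prime factors, and for such $n$ the divisor in $(n/N,N]$ witnessing $n\in[N][N]$ may be taken to have about half as many prime factors, i.e.\ about $\tfrac{\log\log N}{\log 4}$, again spread out. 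Setting $A=\mathcal F\cap[N]$ then gives $\mathcal G\subseteq AA\subseteq[N][N]$, whence $|AA|\sim M_N$.

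It remains to bound $|A|$, which is where the exponent $\theta$ and the correction factor appear. Using the Sathe--Selberg and Landau estimates for $\pi_k(x)=|\{m\le x:\Omega(m)=k\}|$, one has $\pi_k(N)\asymp N(\log N)^{-\theta}(\log\log N)^{-1/2}$ for $k$ near $\tfrac{\log\log N}{\log 4}$; the exponent $-\theta$ comes out of Stirling's formula together with the identity $\tfrac1{\log 4}(1+\log\log 4)=1-\theta$. Summing $\pi_k(N)$ over the window of admissible $k$ and over the admissible regularity profiles of $m$, and taking the window width of order $\sqrt{\log\log N\,\log\log\log N}$ --- just large enough for the covering step above to go through --- produces the bound $|A|\le N(\log N)^{-\theta}\exp\{(2/3)\sqrt{\log\log N\log\log\log N}\}$.

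I expect the covering step to be the main obstacle: one needs that for all but $o(M_N)$ integers $n\in[N][N]$, not merely some divisor $a\in(n/N,N]$ but also the cofactor $n/a$ lies in $\mathcal F$. The structure theorems supply a single well-behaved divisor; turning this into a self-consistent family requires defining $\mathcal F$ by a condition that is symmetric under $a\leftrightarrow n/a$ and stable under the $O(\sqrt{\log\log N})$-sized fluctuations of $\Omega$ and of the prime-factor distribution, and then checking --- either from the cited structural results or by directly bounding the exceptional set --- that widening the window to order $\sqrt{\log\log N\log\log\log N}$ absorbs all but $o(M_N)$ of $[N][N]$. The gap between the resulting size of $A$ and the trivial lower bound $M_N^{1/2}\asymp N(\log N)^{-\theta}(\log\log N)^{-3/4}$ reflects the cost of retaining an entire window of profiles rather than one, and is presumably not best possible.
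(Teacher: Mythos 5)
Your overall frame is fine (since $AA\subseteq[N][N]$ automatically, one only has to cover all but $o(M_N)$ of $[N][N]$ by products of two elements of $A$, and the Sathe--Selberg computation of the size of $\{m\le N:\Omega(m)\approx \log_2N/\log 4\}$ correctly produces the exponent $\theta$ and the $\exp\{O(\sqrt{\log_2 N\log_3 N})\}$ factor). But the step you yourself flag as ``the main obstacle'' is a genuine gap, and it is exactly the content of the theorem. The structure theory of \cite{Ford-Hxyz} gives information about a typical $n\in[N][N]$ (its number of prime factors, their distribution) and guarantees the existence of \emph{some} divisor in $(n/N,N]$; it does not provide, and it is not routine to extract from it, a factorization $n=ab$ with $a,b\le N$ in which \emph{both} factors satisfy a prescribed regularity profile (your set $\mathcal F$). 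Proving such a ``both halves are nice'' statement for all but $o(M_N)$ elements of $[N][N]$ would require reopening the arguments of \cite{Ford-Hxyz} and tracking the joint structure of a divisor and its cofactor --- precisely the kind of refined analysis the paper defers to a sequel. As written, your $\mathcal G$ and the symmetric, fluctuation-stable $\mathcal F$ are postulated, not constructed, so the proof is incomplete at its central point.

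The paper's actual argument shows this machinery is unnecessary, because it imposes only a one-sided condition: $A=\{m\le N:\Omega(m)\le k+r\}$ with $k=\fl{\log_2 N/\log 4}$ and $r=2\sqrt{\log_2 N\log_3 N}$, with no lower window and no spread-out condition (these would only shrink $A$ slightly while making the covering genuinely harder; note also that a lower bound on $\Omega$ is essentially automatic, since $\Omega(a)+\Omega(b)=\Omega(n)\approx 2k$ forces the cofactor up once both are bounded above). Then any $n\in[N][N]$ missed by $AA$ has a witnessing factorization $n=ab$, $a,b\le N$, with $\Omega(b)\ge k+r$, and such $n$ are counted directly by the parametric (tilting) method: either $\Omega(n)>2k+h$ with $h=\fl{5\log_3 N}$, which happens for only $o(M_N)$ integers $n\le N^2$, or $\Omega(ab)\le 2k+h$ while $\Omega(b)\ge k+r$, an imbalance penalized by a two-parameter weight $\lam_2^{\Omega(ab)}\lam_1^{\Omega(b)}$ and Lemma \ref{weights}, giving a saving of a power of $\log_2 N$ over $N^2(\log N)^{-2\theta}$. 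The only input from \cite{Ford-Hxyz} is the order of magnitude \eqref{NN} of $M_N$, used to certify that these two exceptional counts are $o(M_N)$. If you want to salvage your approach, the quickest repair is to drop the two-sided window and the regularity conditions from $\mathcal F$ and replace the appeal to a hypothetical covering family $\mathcal G$ by this complementary counting of ``bad'' factorizations.
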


The construction of extremal sets satisfying the required properties in either
Theorem \ref{Prob1.2thresh} or \ref{Prob1.4thresh} requires an analysis of
the structure of integers in the ``multiplication table'' $[N][N]$, as worked out
in \cite{Ford-Hxyz}.  From this work, we know that most elements of $[N][N]$ have
$\frac{\log\log N}{\log 2}+O(1)$ prime factors, and moreover, these prime factors 
are not ``compressed at the bottom'', meaning that for most $n\in [N][N]$ we have
\[
\# \{p|n: p\le t \} \le \frac{\log\log t}{\log 2} + O(1) \qquad (3\le t\le N).
\]
Here the terms $O(1)$ should be interpreted as being bounded by
a sufficiently large constant
$C=C(\epsilon)$, where $\epsilon$ is the relative density of exceptional
elements of  $[N][N]$.   This suggests that candidate extremal sets $A$ should 
consist of integers with about half as many prime factors; 
that is, $\omega(n)\approx \frac{\log\log N}{\log 4}$.

In a sequel paper, we will refine the estimates in Theorems \ref{Prob1.2thresh}
and \ref{Prob1.4thresh}.  In particular, we will show that the threshold size of $A$ for the property $|AA| \sim |A|^2/2$ is genuinely smaller than the threshold size 
of $|A|$ for the property $|AA| \sim M_N$.  More precisely, we will show that 
if $|A| \le \frac{N}{(\log N)^\theta} \exp\{O(\sqrt{\log\log N})\}$, then 
 $|AA| \not\sim M_N$.  The proof requires a much more intricate analysis of the
 arguments in the papers \cite{Ford-Hxyz} and \cite{Ford-Hxy2y}.

\textbf{Acknowledgements.}  The author is grateful to Sergei Konyagin for bringing
the paper \cite{CRR} to his attention, and for helpful conversations.


{\Large \section{Preliminaries}}

Here $\omega(n)$ is the number of distinct prime factors of $n$, 
$\omega(n,t)$ is the number of prime factors $p|n$ with $p\le t$,
$\Omega(n)$ is the number of  prime power divisors of $n$, 
$\Omega(n,t)$ is the number of prime powers $p^a|n$ with $p\le t$.
We analyze the distribution of these functions using a simple, but powerful 
technique known as the parametric method (or the ``tilting method'' in
probability theory).

For brevity, we use the notation $\log_k x$ for the $k$-th iterate of the logarithm of $x$.

\begin{lem}\label{weights}
Let $f$ be a real valued multiplicative function such that $0\le f(p^a) \le 1.9^a$
for all primes $p$ and positive integers $a$.  Then, for all $x>1$ we have
\[
\sum_{n\le x} f(n) \ll \frac{x}{\log x} \exp \Big( \sum_{p\le x} \frac{f(p)}{p} \Big).
\]
\end{lem}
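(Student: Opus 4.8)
The plan is to prove this by the parametric (Rankin) method in two stages: an easy first stage that gives the bound up to an extra factor $\log x$, and a second stage that removes it. \emph{Stage 1.} With $\sigma=1+1/\log x$ we have $1\le(x/n)^{\sigma}$ for every $n\le x$, so $\sum_{n\le x}f(n)\le x^{\sigma}\sum_{n\ge1}f(n)n^{-\sigma}$. The series on the right converges absolutely, and this is exactly where the hypothesis $f(p^{a})\le1.9^{a}$ with $1.9<2$ enters: it makes each Euler factor $\sum_{a\ge0}f(p^{a})p^{-a\sigma}$ finite — the critical prime being $p=2$, where $1.9\cdot2^{-\sigma}<1$ — while $\sigma>1$ makes the Euler product converge. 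Estimating the factor at $p=2$ by an absolute constant, the factors at $p\ge3$ by $\exp\bigl(f(p)p^{-\sigma}+O(p^{-2\sigma})\bigr)$, and using $p^{-\sigma}\le p^{-1}$ for $p\le x$ together with $\sum_{p>x}f(p)p^{-\sigma}\ll\sum_{p>x}p^{-1-1/\log x}\ll1$, one obtains $\sum_{n\ge1}f(n)n^{-\sigma}\ll\exp\bigl(\sum_{p\le x}f(p)/p\bigr)$; since $x^{\sigma}=ex$ this yields
\[
\sum_{n\le x}f(n)\ll x\exp\Bigl(\sum_{p\le x}\frac{f(p)}{p}\Bigr)\qquad(x>1).
\]

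\emph{Stage 2.} Write $S(t)=\sum_{n\le t}f(n)$ and use the identity $S(x)\log x=\int_{1}^{x}S(t)\,\frac{dt}{t}+\sum_{n\le x}f(n)\log n$. By Stage 1, $\int_{1}^{x}S(t)\,dt/t\ll\int_{1}^{x}\exp\bigl(\sum_{p\le t}f(p)/p\bigr)\,dt\ll x\exp\bigl(\sum_{p\le x}f(p)/p\bigr)$, which after division by $\log x$ is of the desired size; so it remains to show $\sum_{n\le x}f(n)\log n\ll x\exp\bigl(\sum_{p\le x}f(p)/p\bigr)$. Writing $\log n=\sum_{p^{k}\mid n}\log p$ turns the left side into $\sum_{p\le x}\log p\sum_{a\ge1}a\,f(p^{a})\,T_{p}(x/p^{a})$, where $T_{p}(w)=\sum_{m\le w,\,p\nmid m}f(m)\le S(w)$; the contribution of the terms with $a\ge2$ (and of prime powers generally) is negligible because $1.9<2$ makes $\sum_{a\ge2}1.9^{a}p^{-a}$ converge uniformly in $p$, the decisive case again being $p=2$. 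The main term is thus $\sum_{p\le x}\frac{f(p)\log p}{p}\,S(x/p)$, and here one must beat the trivial estimate $\sum_{p\le x}\frac{f(p)\log p}{p}\ll\log x$: the gain comes from using, in place of Stage 1, the sharper inductive bound $S(x/p)\ll\frac{x/p}{\log(x/p)}\exp\bigl(\sum_{q\le x/p}f(q)/q\bigr)$, whose factor $\exp\bigl(-\sum_{x/p<q\le x}f(q)/q\bigr)$ damps precisely the range of large $p$ where $\log(x/p)$ is small, so that $\sum_{p\le x}\frac{f(p)\log p}{p\log(x/p)}\exp\bigl(-\sum_{x/p<q\le x}f(q)/q\bigr)$ stays bounded. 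Running this as an induction on $x$, with small $x$ handled trivially, then yields the lemma.

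The hard part is Stage 2: the induction sits essentially at the boundary, as one already sees from the extremal instance $f=1.9^{\Omega}$, for which the true order $\sum_{n\le x}f(n)\asymp\frac{x}{\log x}(\log x)^{1.9}$ makes the main-term estimate tight, so the bookkeeping must be carried out with care. Equivalently, one may instead decompose each $n\le x$ according to its largest prime factor and extract the factor $1/\log x$ directly from the prime-counting bound $\pi(x/d)\ll(x/d)/\log(x/d)$, iterating down through the prime factors; in every version the restriction $1.9<2$ is used in the same essential way, to keep the auxiliary prime-power sums $\sum_{2^{k}\le x}1.9^{k}=O(x^{\log_{2}1.9})=o(x)$ — a saving that is lost for base $2$, where $\sum_{2^{k}\le x}2^{k}\gg x$ leaves no room in the estimate.
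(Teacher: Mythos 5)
Your Stage 1 (Rankin's method) is fine, but the paper does not prove this lemma at all: it quotes it as a special case of Theorem 01 of Halberstam--Richert as presented in Hall--Tenenbaum's \emph{Divisors}, so the burden of your proposal falls entirely on Stage 2 --- and that is where there is a genuine gap. Your plan is to bound the main term $\sum_{p\le x}f(p)\log p\,S(x/p)$ by feeding in the inductive hypothesis $S(y)\le K\,\tfrac{y}{\log y}\exp\bigl(\sum_{q\le y}f(q)/q\bigr)$, and you assert that the resulting sum $\sum_{p\le x}\frac{f(p)\log p}{p\log(x/p)}\exp\bigl(-\sum_{x/p<q\le x}f(q)/q\bigr)$ ``stays bounded''. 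It does stay bounded, but the bound is $1+o(1)$, not $1-\delta$: already for $f=\lambda^{\Omega}$ with any constant $\lambda$ the factor $\exp(-\cdots)$ is $\asymp\bigl(\log(x/p)/\log x\bigr)^{\lambda}$, and partial summation gives $\frac{\lambda}{(\log x)^{\lambda}}\int_0^{\log x}(\log x-u)^{\lambda-1}\,du=1$. So the inductive step produces $S(x)\log x\le K(1+o(1))\,xE+O(xE)$ with $E=\exp\bigl(\sum_{p\le x}f(p)/p\bigr)$, where the $O(xE)$ comes from $\int_1^x S(t)\,dt/t$ and the prime-power terms; dividing by $\log x$ you would need $K(1+o(1))+O(1)\le K$, which is impossible for any fixed $K$. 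This is not a bookkeeping issue that care can repair: the bootstrap is exactly at the critical constant because the inequality you are proving is sharp, and the additive error terms have nowhere to go. Your alternative sketch via the largest prime factor and $\pi(x/d)\ll(x/d)/\log(x/d)$ has the same unresolved difficulty when $d$ is close to $x$.

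The missing idea --- which is precisely how the cited Theorem 01 is proved --- is to avoid bounding $S(x/p)$ altogether and instead fix the cofactor. Write
\[
\sum_{n\le x}f(n)\log n=\sum_{\substack{p^a m\le x\\ p\nmid m}}f(m)f(p^a)\log(p^a)
=\sum_{m\le x}f(m)\sum_{\substack{p^a\le x/m\\ p\nmid m}}f(p^a)\log(p^a),
\]
and bound the inner sum by Chebyshev: the $a=1$ part is $\le 1.9\,\theta(x/m)\ll x/m$, and the $a\ge2$ part is $o(x/m)$ (this is where $1.9<2$ is genuinely needed, exactly as you observed for $p=2$). Then $\sum_{n\le x}f(n)\log n\ll x\sum_{m\le x}f(m)/m\ll x\exp\bigl(\sum_{p\le x}f(p)/p\bigr)$, the last step by expanding the Euler product for $\sum_{m\le x}f(m)/m$. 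Combined with your identity $S(x)\log x=\int_1^x S(t)\,dt/t+\sum_{n\le x}f(n)\log n$ and your Stage 1 bound for the integral, this gives the lemma directly, with no induction. So the structure of your argument is salvageable, but the decisive step must be the Wirsing/Chebyshev-type swap above rather than the self-referential estimate of $S(x/p)$.
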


\begin{proof}
This is a corollary of a more general theorem of Halberstam and Richert; see
Theorem 01 of \cite{Divisors} and the following remarks.
\end{proof}

In the special case $f(n)=\lambda^{\Omega(n)}$,
where $0<\lambda\le 1.9$,  we get by Mertens' estimate
the uniform bound
\be\label{lambda}
\sum_{n\le x}\lambda^{\Omega(n,t)} \ll x (\log t)^{\lambda-1}.
\ee
This is useful for bounding the tails of the distribution of $\Omega(n,t)$.

%
%
%
\textbf{\Large
\section{ Proof of Theorem \ref{Prob1.2thresh}}
}

Define
\[
k = \fl{\frac{\log_2 N}{\log 4}}
\]
and let
\[
B = \left\{N/2 < m\le N : m\text{ squarefree}, \; \omega(m)=k,\;  \omega(m,t) \le \frac{\log_2 t}{\log 4}+2 \; (3\le t\le N) \right\}.
\]
Our proof of Theorem \ref{Prob1.2thresh} has three parts:
\begin{enumerate}
 \item[(i)] establish a lower bound
on the size of $B$, showing that the upper bound on $\omega(n,t)$ affects
the size of $B$ only mildly;
\item[(ii)] give an upper bound on the multiplicative energy $E(B)$, which shows that there are
few nontrivial solutions of $b_1b_2=b_3b_4$; consequently, the product set $BB$ 
is large; and
\item[(iii)]  select a thin random  subset $A$ of $B$ that has the desired properties, an idea borrowed from Proposition 3.2 of \cite{CRR}. 
\end{enumerate}

\begin{lem}\label{Bsize}
We have
\[
|B| \gg \frac{N}{(\log N)^{\theta}(\log_2 N)^{3/2}}.
\]
\end{lem}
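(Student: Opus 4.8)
The plan is to construct most of $B$ by hand. Write a candidate member of $B$ as $m=q_1q_2\cdots q_{k-1}\,p$, where $q_1<q_2<\cdots<q_{k-1}$ are primes obeying the staircase inequality $\log_2 q_i\ge(\log 4)(i-2)$ for all $i$, where $Q:=q_1\cdots q_{k-1}\le\sqrt N$, and where $p$ is a prime in $(N/(2Q),\,N/Q]$. One first checks that any such $m$ lies in $B$: it is squarefree with $\omega(m)=k$ and $N/2<m\le N$ by construction, and since $p>N/(2Q)\ge\tfrac12\sqrt N$ exceeds every $q_i$, the number $p$ is the largest prime factor of $m$, so the tuple $(q_1,\dots,q_{k-1},p)$ is recovered from $m$ and distinct tuples give distinct $m$. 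The only point needing care is the bound on $\omega(m,t)$ for $3\le t\le N$: when $t<p$ it is forced by the staircase inequality (if $q_j$ is the largest $q_i$ not exceeding $t$, then $\omega(m,t)=j$ and $\log_2 t\ge\log_2 q_j\ge(\log 4)(j-2)$), and when $t\ge p$ it follows from $p\ge\tfrac12\sqrt N\ge\exp(4^{k-2})$, which gives $\log_2 t\ge\log_2 p\ge(\log 4)(k-2)$ while $\omega(m,t)=k$. Therefore, using Chebyshev's bound $\pi(N/Q)-\pi(N/(2Q))\gg\frac{N/Q}{\log N}$ (legitimate because $N/Q\ge\sqrt N$),
\[
|B|\ \ge\ \sum_{\substack{q_1<\cdots<q_{k-1}\ \mathrm{prime}\\ \log_2 q_i\,\ge\,(\log 4)(i-2)\\ Q\,\le\,\sqrt N}}\bigl(\pi(N/Q)-\pi(N/(2Q))\bigr)\ \gg\ \frac{N}{\log N}\,\Sigma,
\]
where $\Sigma$ is the sum of $\frac1{q_1\cdots q_{k-1}}$ over the same staircase family.

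The heart of the argument is to show $\Sigma\gg(\log_2 N)^{-1}\,\frac{(\log_2 N)^{k-1}}{(k-1)!}$. Put $n=k-1$ and pass to the variable $\tau=\log_2 t$. Restrict $\Sigma$ further by also demanding the loose upper bound $\log_2 q_i\le(\log 4)i+C\sqrt{\log_2 N}$ for all $i$, with $C$ a suitable absolute constant; this forces $\prod q_i\le\sqrt N$ with room to spare, so it is compatible with the constraint already present. Mertens' theorem gives that the $\tfrac1p$-weighted mass of the primes $q$ with $\log_2 q\in[a,b]$ is $(b-a)+O(e^{-a})$, and since $\sum_i e^{-(\log 4)i}<\infty$ the standard comparison of such prime sums with Lebesgue measure (as in \cite{Ford-Hxyz}) shows that, up to a bounded factor, $\Sigma$ is bounded below by the volume of the region
\[
\bigl\{\,0\le\tau_1<\cdots<\tau_n\le\log_2\sqrt N\ :\ (\log 4)(i-2)\le\tau_i\le(\log 4)i+C\sqrt{\log_2 N}\ \ \forall i\,\bigr\}.
\]
The enclosing simplex $\{0\le\tau_1<\cdots<\tau_n\le\log_2\sqrt N\}$ has volume $\frac{(\log_2\sqrt N)^n}{n!}$; rescaling to $[0,1]$, the lower staircase amounts to requiring the uniform order statistics $U_{(1)}\le\cdots\le U_{(n)}$ to satisfy $U_{(i)}\ge\frac{(\log 4)(i-2)}{\log_2\sqrt N}$ for every $i$. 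Because $\log_2\sqrt N\ge(\log 4)(k-1)$, this event contains $\{\widehat F_n(t)<t/c\ \text{for all }t\in(0,1)\}$ with $c=1-\tfrac1n$, which has probability exactly $1-c=\tfrac1n$ by Daniels' identity for the uniform empirical distribution function; and imposing the loose upper bound discards only a bounded proportion of these configurations, so the volume is $\gg\frac1n\cdot\frac{(\log_2\sqrt N)^n}{n!}\asymp(\log_2 N)^{-1}\,\frac{(\log_2 N)^{k-1}}{(k-1)!}$ (using $(\log_2\sqrt N)^n=(\log_2 N)^n\bigl(1-\tfrac{\log 2}{\log_2 N}\bigr)^n\asymp(\log_2 N)^n$).

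It then only remains to apply Stirling's formula. With $k-1=\frac{\log_2 N}{\log 4}+O(1)$ one gets $\frac{(\log_2 N)^{k-1}}{(k-1)!}\asymp\frac{1}{\sqrt{\log_2 N}}\,(e\log 4)^{k-1}$, and $(e\log 4)^{k-1}=\exp\bigl((k-1)(1+\log\log 4)\bigr)\asymp\exp\bigl((1-\theta)\log_2 N\bigr)=(\log N)^{1-\theta}$ by the definition \eqref{theta} of $\theta$ and the identity $e^{\log_2 N}=\log N$. Hence
\[
|B|\ \gg\ \frac{N}{\log N}\cdot\frac{1}{\log_2 N}\cdot\frac{(\log N)^{1-\theta}}{\sqrt{\log_2 N}}\ =\ \frac{N}{(\log N)^{\theta}(\log_2 N)^{3/2}}.
\]

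I expect the main obstacle to be the two facts left implicit above, both belonging to the circle of ideas of \cite{Ford-Hxyz}: that passing from the prime sum $\Sigma$ to the corresponding Euclidean volume loses only a bounded factor (which rests on the convergence of $\sum_i e^{-(\log 4)i}$, keeping the accumulated Mertens errors controlled), and that conditioning the uniform order statistics on the Daniels event does not push them appreciably upward, so that the loose upper bound --- the device that keeps $\prod q_i$ below $\sqrt N$ --- survives for a positive proportion of the relevant configurations. One should also note that the \emph{sharp} order $\asymp(\log_2 N)^{-1}$ of the ballot probability is essential: a weaker estimate of size $(\log_2 N)^{-3/2}$ would leave the bound a full factor $\sqrt{\log_2 N}$ short. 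The lower-bound argument for $H(x,y,2y)$ in \cite{Ford-Hxyz} carries out precisely this analysis, and Lemma~\ref{Bsize} follows by transcribing it.
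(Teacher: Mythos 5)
Your overall architecture (write $m=q_1\cdots q_{k-1}p$ with a staircase condition on the $q_i$, reduce to a weighted prime sum $\Sigma$, compare with a simplex volume and a ballot probability $\asymp 1/n$, finish with Stirling) is the right circle of ideas, and your membership check, the Chebyshev step, and the final Stirling computation are fine. But the device you introduce to enforce $Q=q_1\cdots q_{k-1}\le\sqrt N$ does not work, and this is not a cosmetic issue --- it is exactly the hard point. If $\log_2 q_i\le(\log 4)i+C\sqrt{\log_2 N}$ for all $i$, then $\log q_i\le 4^i e^{C\sqrt{\log_2 N}}$, so $\log Q\le \tfrac{4^{k}}{3}e^{C\sqrt{\log_2 N}}$; since $4^k\asymp\log N$ (indeed $4^k>\tfrac14\log N$), this bound is $\gg(\log N)\,e^{C\sqrt{\log_2 N}}$, which is far larger than $\tfrac12\log N$. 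In the $\tau=\log_2$ scale the constraint $\sum_i e^{\tau_i}\le\tfrac12\log N$ is dominated by the top indices and allows only $O(1)$ slack there, not $C\sqrt{\log_2 N}$; so your ``loose upper bound'' does not force $Q\le\sqrt N$, and without that your Chebyshev step (which needs $N/Q\ge\sqrt N$) and the whole parametrization collapse.

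Moreover, the surviving claim you would need --- that, conditioned on the Daniels event, the configurations additionally satisfying the (much tighter, $O(1)$-slack) upper constraints near the top still carry a positive proportion of the measure --- is asserted but not proved, and it cannot be rescued by a union bound: the unconditional failure probability of an upper staircase with slack $C\sqrt{\log_2 N}$ is a constant (DKW gives $e^{-cC^2}$), which swamps the Daniels probability $1/n\asymp1/\log_2 N$; and conditioning on the Daniels event pushes the order statistics \emph{up} (your own constraint already forces $\tau_{k-1}\ge\log_2\sqrt N-O(1)$), making upper constraints harder, not easier. A genuine two-sided ballot/volume estimate is required here, and that is precisely the content of the lower-bound argument in \cite{Ford-Smirnov} (carried out for squarefree $m$ in a dyadic range), which the paper's proof simply invokes after noting that the condition $\omega(m,t)\le\frac{\log_2 t}{\log 4}+2$ is implied by $\log_2 p_j(m)\ge(j-2)\log 4$; your closing remark that the lemma ``follows by transcribing'' \cite{Ford-Hxyz} is in effect the same appeal, but the reduction you built on top of it contains the quantitative error above, so as written the proof has a real gap.
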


\begin{lem}\label{energy}
Let $E(B)=|\{(b_1,b_2,b_3,b_4)\in B^4: b_1b_2=b_3b_4\}|$ be the multiplicative
energy of $B$.  Then
\[
E(B) \ll |B|^2 (\log_2 N)^4.
\]
\end{lem}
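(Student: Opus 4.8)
\textbf{Proposed proof of Lemma \ref{energy}.} The plan is to parametrize the solutions of $b_1b_2=b_3b_4$ by the classical hyperbola substitution, reduce the count to a two–variable divisor sum over $B$, and estimate that sum with Lemma \ref{weights} together with the spread–out condition built into $B$.

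First I would record the parametrization. If $b_1b_2=b_3b_4$ with all $b_i\in B$, then the array $\left(\begin{smallmatrix} b_1 & b_3\\ b_4 & b_2\end{smallmatrix}\right)$ has determinant $0$, hence factors as $b_1=pq$, $b_3=pr$, $b_4=sq$, $b_2=sr$ with $p=\gcd(b_1,b_3)$ (and squarefreeness of the $b_i$ forces $p,q$, $p,r$, $q,r$, $q,s$, $r,s$ pairwise coprime). Consequently
\[
E(B)\le \#\{(p,q,r,s): pq,\,pr,\,sq,\,sr\in B\}=\sum_{q,r\ge 1}|U(q,r)|^2,\qquad
U(q,r):=\{u:\ uq\in B,\ ur\in B\},
\]
where the last identity comes from summing over $q,r$ first: for fixed $(q,r)$ there are $|U(q,r)|$ admissible choices of $p$ and $|U(q,r)|$ of $s$. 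Comparing numbers of prime factors gives $\omega(q)=\omega(r)$ and $\omega(u)=k-\omega(q)$ for $u\in U(q,r)$; the term $\omega(q)=0$ (i.e. $q=r=1$) contributes exactly $|U(1,1)|^2=|B|^2$, the ``trivial'' solutions, so it suffices to control $\sum_{q,r:\ \omega(q)=\omega(r)\ge 1}|U(q,r)|^2$.

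Next I would analyze $|U(q,r)|$. Since $uq,ur\in(N/2,N]$ we have $q\asymp r$; writing $g=\gcd(q,r)$, $q=g\eta_1$, $r=g\eta_2$ with $g,\eta_1,\eta_2$ pairwise coprime, a member $u\in U(q,r)$ gives $m:=ug$ with $m\eta_1,m\eta_2\in B$, so $u=m/g$ runs over integers $u\le N/q$, coprime to $qr$, with $\omega(u)=k-\omega(q)$, and satisfying the inherited bound $\omega(u,v)\le\frac{\log_2 v}{\log 4}+2$ for $3\le v\le N$. Applying Lemma \ref{weights} to $f(n)=\lambda^{\omega(n)}\mathbf 1_{(n,qr)=1}$ with $\lambda$ chosen near the optimal value $\approx \frac{k-\omega(q)}{\log_2 N}$ (which is where the constant $\theta$ enters, via $(\log 4)^{-1}+\log\!\big((\log 4)^{-1}\big)-1=-\theta$), yields a bound of the form
\[
|U(q,r)|\ \ll\ \frac{|B|}{q}\,(\log_2 N)^{O(1)}\,(\log 4)^{-\omega(q)}\prod_{\ell\mid qr}\Big(1-\frac1\ell\Big)^{-1},
\]
uniformly in $q,r$; the power of $\log_2 N$ here is not critical, but the dependence on $\omega(q)$ through the factor $(\log 4)^{-\omega(q)}$ (a consequence of $k\approx\log_2 N/\log 4$, so that ``shifting $\omega$ by $\omega(q)$'' in the Landau factor costs a bounded amount per prime) is what will make the final sum converge. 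I would then insert this into $\sum_{q,r}|U(q,r)|^2$, sum first over $q,r$ with $\gcd(q,r)=g$ fixed and $q\asymp r$, then over $g$, reducing everything to Dirichlet‑type sums such as $\sum_{n}(\log 4)^{-2\omega(n)}n^{-2}<\infty$ (estimated, if needed, by Lemma \ref{weights} again), which deliver $E(B)\ll |B|^2(\log_2 N)^4$.

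The main obstacle is precisely the estimation in the last two paragraphs, and the point I expect to require real care is that \emph{all four} membership conditions $pq,pr,sq,sr\in B$ must be kept: discarding any one of them (say bounding the completions of a fixed pair by $t(q)\sum_q t(q)$ with $t(n)=\#\{m\in B:n\mid m\}$) inflates the bound by a genuine power of $\log N$, not merely of $\log_2 N$. Equally delicate is the coprimality correction $\prod_{\ell\mid qr}(1-1/\ell)^{-1}$, which is unbounded in general; it is precisely the hypothesis $\omega(m,v)\le\frac{\log_2 v}{\log 4}+2$ in the definition of $B$ — forcing the small prime factors of members of $B$, hence of $q,r,u$, to be extremely sparse (so that $\sum_{\ell\mid n}1/\ell=O(1)$) — that keeps this correction under control. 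Verifying that the cumulative loss is only $O((\log_2 N)^4)$, and not a power of $\log N$, is the crux of the argument.
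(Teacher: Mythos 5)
Your opening reduction is the same as the paper's: writing $b_1=pq$, $b_3=pr$, $b_4=sq$, $b_2=sr$ via the four pairwise gcds is exactly the decomposition used there, and bounding $E(B)$ by the number of quadruples $(p,q,r,s)$ with all four products in $B$ is fine. The gap is in the quantitative core. Your displayed bound for $|U(q,r)|$ retains from the membership conditions only the information inherited by $u$ alone ($u\le N/q$, $\omega(u)=k-\omega(q)$, $u$'s own spread condition, coprimality), and this is too weak to give the lemma: it discards the coupling, forced by $uq\in B$ being squarefree with the spread condition, between $\omega(q)$ and the number of prime factors of $u$ below the scale of $q$ (namely $\omega(u,t)\le \frac{\log_2 t}{\log 4}+2-\omega(q,t)$, so a $q$ with the maximal admissible $\approx\frac{\log_2 q}{\log 4}$ prime factors forces $u$ to have almost no prime factors below $q$). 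Moreover your final accounting is miscounted: after squaring, the $(q,r)$-sum is not of the shape $\sum_n(\log 4)^{-2\omega(n)}n^{-2}$, because for each $q\asymp Q$ there are roughly $Q(\log Q)^{-\theta+o(1)}$ admissible $r$ (all $r\asymp q$ with $\omega(r)=\omega(q)$ and the spread condition), not $O(1)$. Carrying out your scheme honestly, the dominant contribution comes from $\omega(q)=\omega(r)$ near $\frac{\log_2 Q}{\log 4}$, and one gets, per dyadic scale $Q$, roughly $N^2(\log N)^{-2\theta}(\log Q)^{2/\log 4-2}$ times powers of $\log_2 N$, hence after summing over $Q\ll N^{1/2}$ only $E(B)\ll |B|^2(\log N)^{2/\log 4-1+o(1)}$ --- a genuine power of $\log N$, exactly the inflation you yourself warn about. (By contrast, the coprimality product $\prod_{\ell\mid qr}(1-1/\ell)^{-1}$, which you single out as the crux, is harmless: it is always $O(\log_2 N)$.)

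What is missing is the mechanism by which the paper exploits the constraint at the scale of the small gcds. After localizing $T\le \beta_{14}<2T$ dyadically (so $\beta_{23}\asymp T$ and $\beta_{13},\beta_{24}\asymp N/T$), the paper notes that each $b_j\in B$ satisfies $\Omega(b_j,4T)\le z_T=\frac{\log_2(4T)}{\log 4}+2$ in addition to $\Omega(b_j)=k$, and majorizes the indicator of these events by the two-parameter tilt $\lambda_1^{\Omega(b_j,4T)-z_T}\lambda_2^{\Omega(b_j)-k}$ with $0<\lambda_1,\lambda_2<1$. Multiplying over the four $b_j$ makes the weight factor completely over the four gcd variables; each factor is summed by Lemma \ref{weights}, and the choice $\lambda_1^2=\frac12$, $\lambda_2^2=\frac1{\log 4}$ gives $U_T\ll N^2/((\log N)^{2\theta}\log T)$, after which summing over dyadic $T$ and invoking Lemma \ref{Bsize} yields $|B|^2(\log_2 N)^4$. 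The $\lambda_1$-weight attached to the large cofactors $\beta_{13},\beta_{24}$ encodes precisely the anti-correlation your $|U(q,r)|$ bound throws away, and it is what converts the $(\log T)^{2/\log 4-1}$ loss into a single factor $\log_2 N$. To repair your argument you would have to keep, inside the $u$-count, the condition $\omega(u,2Q)\le \frac{\log_2(2Q)}{\log 4}+2-\omega(q)$ (and its analogue involving $r$), quantify the resulting saving as a function of $\omega(q)$, and only then sum over $q,r$ --- at which point you are essentially reproducing the paper's tilted computation.
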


\begin{lem}\label{randomsubset}
Given $B\subset [N]$ with $E(B)\le |B|^2 f(N)$ and $f(N)\le |B|^{1/2}$,  let $A$ be a subset of $B$ where the
elements of $A$ are chosen at random, each element $b\in B$ chosen with probability
$\rho$ satisfying $\rho^2=o(1/f(N))$ and $\rho|B|^2 \gg |N|^{1.1}$ as $N\to \infty$.  Then with probability $\to 1$ as
$N\to \infty$, we have $|A|\sim \rho |B|$ and $|AA| \sim \frac12 |A|^2$.
\end{lem}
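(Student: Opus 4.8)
The plan is to follow the random-subset argument of Proposition 3.2 of \cite{CRR}: apply the second-moment method to control $|A|$, apply a first-moment (Markov) estimate to the off-diagonal multiplicative energy of $A$, and then feed the results into the two elementary deterministic inequalities
\[
|AA|\le\binom{|A|}{2}+|A|,\qquad |AA|\ge\frac{|A|^4}{E(A)},
\]
the second being Cauchy--Schwarz applied to the identities $\sum_{c}r(c)=|A|^2$ and $\sum_{c}r(c)^2=E(A)$, where $r(c)=\#\{(a_1,a_2)\in A^2:a_1a_2=c\}$.  Writing $E^*(A)$ for the number of \emph{off-diagonal} quadruples $(a_1,a_2,a_3,a_4)\in A^4$ with $a_1a_2=a_3a_4$ but $\{a_1,a_2\}\neq\{a_3,a_4\}$ as multisets, one has the exact identity $E(A)=2|A|^2-|A|+E^*(A)$.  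Thus the lemma reduces to proving that, with probability tending to $1$ as $N\to\infty$: \textup{(a)}~$|A|\sim\rho|B|$ (in particular $|A|\to\infty$); and \textup{(b)}~$E^*(A)=o(|A|^2)$.  Indeed, granting \textup{(a)} and \textup{(b)}, the first inequality gives $|AA|\le\tfrac12|A|^2(1+o(1))$, and the second gives $|AA|\ge|A|^4\big/\big(2|A|^2(1+o(1))\big)=\tfrac12|A|^2(1-o(1))$.

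For \textup{(a)} I would use Chebyshev's inequality.  Since $\E|A|=\rho|B|$ and $\operatorname{Var}|A|=\rho(1-\rho)|B|\le\rho|B|$, we get $\PR\big(|A|<(1-\eps)\rho|B|\ \text{or}\ |A|>(1+\eps)\rho|B|\big)\le(\eps^{2}\rho|B|)^{-1}$ for every fixed $\eps>0$; the hypotheses $\rho|B|^2\gg N^{1.1}$ and $|B|\le N$ force $\rho|B|\gg N^{0.1}\to\infty$, so this probability tends to $0$.

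For \textup{(b)} the crucial combinatorial point is that an off-diagonal quadruple in $B^4$ can have \emph{no} coincidence of the form $b_1\in\{b_3,b_4\}$ or $b_2\in\{b_3,b_4\}$ (any such coincidence, combined with $b_1b_2=b_3b_4$, forces $\{b_1,b_2\}=\{b_3,b_4\}$), and cannot have $b_1=b_2$ and $b_3=b_4$ simultaneously; hence every off-diagonal quadruple has either exactly $3$ or exactly $4$ distinct entries.  Consequently $\E[E^*(A)]=\rho^{3}E_3+\rho^{4}E_4$, where $E_3$ and $E_4$ count the off-diagonal quadruples in $B^4$ with, respectively, $3$ and $4$ distinct entries.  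I would then bound $E_4\le E^*(B)\le E(B)\le|B|^{2}f(N)$, so that $\rho^{4}E_4\le\rho^{4}|B|^{2}f(N)=o(\rho^{2}|B|^{2})$ by the hypothesis $\rho^{2}=o(1/f(N))$; and, using the $(b_1,b_2)\leftrightarrow(b_3,b_4)$ symmetry together with the trivial divisor bound $\tau(b^2)=N^{o(1)}$, I would bound $E_3\le2\sum_{b\in B}\tau(b^2)\le|B|\,N^{o(1)}$, so that $\rho^{3}E_3\big/(\rho^{2}|B|^{2})\le\rho\,N^{o(1)}/|B|\le N^{o(1)}/|B|\to0$ (using $\rho\le1$ and $|B|\gg N^{0.55}$, itself a consequence of $\rho|B|^2\gg N^{1.1}$ and $\rho\le1$).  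Hence $\E[E^*(A)]=o(\rho^{2}|B|^{2})$, Markov's inequality gives $E^*(A)=o(\rho^{2}|B|^{2})$ with probability tending to $1$, and intersecting with the event in \textup{(a)} yields $E^*(A)=o(|A|^{2})$ with probability tending to $1$.

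The one step requiring genuine care is the bookkeeping separating diagonal from off-diagonal quadruples, and in particular the fact that off-diagonal quadruples have at least three distinct entries: this is precisely what permits the effectively sharp estimate $\E[E(A)]=2\rho^{2}|B|^{2}(1+o(1))+\rho^{3}E_3+\rho^{4}E_4$ in place of the crude bound $E(A)\le E(B)$, which would only give the useless $|AA|\gg\rho^{4}|B|^{2}/f(N)=o(|A|^{2})$.  There is no analytic difficulty: the hypotheses $\rho^{2}=o(1/f(N))$ and $\rho|B|^{2}\gg N^{1.1}$ are calibrated exactly so that the ``generic'' error $\rho^{4}E_4$ and the ``degenerate'' error $\rho^{3}E_3$ are each $o(|A|^{2})$, while the condition $f(N)\le|B|^{1/2}$ is simply what ensures that a probability $\rho$ satisfying both constraints exists.
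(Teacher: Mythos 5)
Your argument is correct, but it reaches the conclusion by a different route than the paper. The paper follows Proposition 3.2 of \cite{CRR} directly: after the same Chebyshev step for $|A|\sim\rho|B|$, it computes the expectation of $|AA|$ itself, namely $\E|AA|=\sum_x\bigl(1-(1-\rho^2)^{\tau_B(x)/2}\bigr)+O(\rho N)$, expands $(1-z)^k=1-kz+O((kz)^2)$ to get $\E|AA|=(\rho^2/2)|B|^2+O\bigl(\rho^4E(B)+\rho N\bigr)=\bigl(\tfrac12+o(1)\bigr)(\rho|B|)^2$, and then concludes from the deterministic bound $|AA|\le\tfrac12|A|(|A|+1)$ together with $|A|\sim\rho|B|$ that a variable whose mean asymptotically attains its essential maximum must be near that maximum with probability tending to $1$. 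You never touch $\E|AA|$: instead you isolate the off-diagonal energy via the exact identity $E(A)=2|A|^2-|A|+E^*(A)$, bound $\E E^*(A)=\rho^3E_3+\rho^4E_4$ by a first-moment argument (the $3$-distinct-entry quadruples handled by the divisor bound, $E_3\le 2\sum_{b\in B}\tau(b^2)\le|B|N^{o(1)}$, the $4$-distinct-entry ones by $E(B)\le|B|^2f(N)$), and recover the lower bound for $|AA|$ from Cauchy--Schwarz, $|AA|\ge|A|^4/E(A)$. The two proofs are morally parallel: your $\rho^4E_4$ term corresponds to the paper's $O(\rho^4E(B))$ error, your $\rho^3E_3$ term and your use of $\rho|B|^2\gg N^{1.1}$ (via $|B|\gg N^{0.55}$, $\rho|B|\gg N^{0.1}$) play the role of the paper's $O(\rho N)$ error, and both rely on the same trivial upper bound for $|AA|$. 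What your route buys is self-containedness---you do not need the formula for $\E|AA|$ imported from \cite{CRR}---at the cost of the extra bookkeeping with degenerate quadruples, the divisor bound, and the Cauchy--Schwarz step; the paper's route gets the asymptotics of $|AA|$ in expectation in one stroke and never has to classify coincidences among $b_1,b_2,b_3,b_4$ explicitly. Your observation that off-diagonal quadruples have at least three distinct entries, and the resulting $\rho^3$ saving, is exactly the point that makes your version work where the crude bound $E(A)\le E(B)$ would fail.
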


Assuming these three lemmas, it is easy to prove Theorem \ref{Prob1.2thresh}.
We apply Lemma \ref{randomsubset} with $f(N)=C(\log_2 N)^4$, invoking
the energy estimate from Lemma \ref{energy} and the size bound from 
Lemma \ref{Bsize}.   For any function $g(N)\to \infty$ as $N\to \infty$, we take 
\[
\rho = \frac{1}{(\log_2 N)^2 g(N)}
\]
and deduce that there is a set $A\subset [N]$ of size 
\[
|A| \sim \rho |B| \gg \frac{N}{(\log N)^{\theta} (\log_2 N)^{7/2} g(N)},
\]
such that $|AA| \sim \frac12 |A|^2$.

Now we prove the three lemmas.

\begin{proof}[Proof of Lemma \ref{Bsize}]
If $p_j(m)$ denotes the $j$-th smallest (distinct) prime factor of $m$, for 
$1\le j\le \omega(m)$, then the condition  $\omega(m,t) \le \frac{\log_2 t}{\log 4}+2
\; (3\le t\le N)$ is implied by
\[
\log_2 p_j(m) \ge (j-2)\log 4  \quad(1\le j\le \omega(m)).
\]
Indeed, the assertion is trivial if $t<p_1(m)$ since in  this case
$\omega(m,t)=0$.  
If  $p_1(m)\le t\le N$, set $j=\max\{ i : t\ge p_i(m) \}$.  Then
\[
\omega(m,t) = j \le \frac{\log_2 p_j(m)}{\log 4} + 2 \le \frac{\log_2 t}{\log 4}+2.
\]
Thus,
\[
|B| \ge | \{N/2< m\le N :\omega(m)=k, m \text{ squarefree},\; \log_2 p_j(m) \ge j\log 4 - 2\log 4 \; (1\le j\le \omega(m))\}|.
\]

This is closely related to the quantity 
\[
N_{k}(x;\a,\b) = |\{ m\le x: \omega(m)=k, \log_2  p_j(m) \ge \a j - \b (1\le j\le k) \}|,
\]
as defined in \cite{Ford-Smirnov}.  
In fact, the lower bound in \cite[Theorem 1]{Ford-Smirnov} for $N_k(x;\a,\b)$
is proved under the additional conditions that $m$ is squarefree and lies in a dyadic range (\cite[\S 4]{Ford-Smirnov}), although this is not stated explicitly.  Thus, the proof of \cite[Theorem 1]{Ford-Smirnov} applies to 
lower-bounding $|B|$.
In the notation of \cite{Ford-Smirnov}, we have
\[
k=\fl{\frac{\log_2 N}{\log 4}}, \; A=\frac{1}{\log 4}, \; \a=\log 4, \; \b=2\log 4, \;
u=2,\; v=\frac{\log_2 N}{\log 4},\; w=\frac{\log_2 N}{\log 4}-k+3.
\]
Taking $\eps=0.1$, one easily verifies the required conditions for 
\cite[Theorem 1]{Ford-Smirnov}:
\[
\a-\b \le A, \quad w\ge 1+\eps, \quad e^{\a(w-1)} - e^{\a(w-2)} \ge 1+\eps. 
\]
Hence, by the proof of the aforementioned theorem, we obtain
\[
|B| \gg \frac{N (\log_2 N)^{k-2}}{(\log N)(k-1)!},
\]
from which the conclusion follows by Stirling's formula.
\end{proof}

\begin{proof}[Proof of Lemma \ref{energy}]
Set
\[
\b_{13} = \gcd(b_1,b_3), \; \b_{14} = \gcd(b_1,b_4), \; \b_{23} = \gcd(b_2,b_3), \; \b_{24} = \gcd(b_2,b_4),
\]
so that
\[
b_1 = \b_{13} \b_{14}, \quad b_2 =\b_{23} \b_{24}, \quad b_3 = \b_{13} \b_{23}, \quad
b_4 = \b_{14} \b_{24}.
\]
Since $1/2 \le b_1/b_4 \le 2$,
it follows that $1/2 \le \b_{13}/\b_{24} \le 2$ and likewise 
that $1/2 \le \b_{14}/\b_{23} \le 2$.
By reordering variables, we may assume without loss of generality that 
$\min(\b_{13},\b_{24}) \gg N^{1/2}$.  For some parameter $T$, which is 
a power of 2 and satisfies $T=O(N^{1/2})$, we have
\be\label{beta14T}
T \le \b_{14} < 2T.
\ee
This implies that $T/2 \le \b_{23} \le 4T$ and $N/8T \le \b_{13},\b_{24} \le 2N/T$.
We also note that
\be\label{kT}
\omega(b_j,4T)=\Omega(b_j,4T) \le z_T \; (1\le j\le 4), \qquad z_T = \frac{\log_2 (4T)}{\log 4}+2.
\ee
Let $\lam_1,\lam_2\in (0,1)$ be two parameters to be chosen later.
Let $U_T$ be the number of solutions of
\[
b_1b_2 = b_3b_4 \qquad (b_j\in B)
\]
also satisfying \eqref{beta14T}.   Using \eqref{kT}, we see that
\bal
U_T &\le \sum_{\b_{14},\b_{23}\le 4T} \, \sum_{\b_{24}, \b_{13} \le 2 N/T}
\prod_{j=1}^2 \lam_1^{\Omega(\b_{j3}\b_{j4},4T)-z_T}\lam_2^{\Omega(\b_{j3}\b_{j4})-k} 
\prod_{j=3}^4 \lam_1^{\Omega(\b_{1j}\b_{2j},4T)-z_T}\lam_2^{\Omega(\b_{1j}\b_{2j})-k}\\
&= \lam_1^{-4z_T} \lam_2^{-4k}  \sum_{\b_{14},\b_{23}\le 4T} \, \sum_{\b_{24}, \b_{13} \le 2N/T}
\lam_1^{2\Omega(\b_{14}\b_{23})+2\Omega(\b_{13}\b_{24},4T)}\lam_2^{2\Omega(\b_{13}\b_{14} \b_{23}\b_{24})}\\
&= \lam_1^{-4z_T} \lam_2^{-4k} \Bigg( \sum_{\b \le 4T} (\lam_1^2\lam_2^2)^{\Omega(\b)} \Bigg)^2 \Bigg( \sum_{\b\le 2N/T} \lam_1^{2\Omega(\b,4T)} \lam_2^{2\Omega(\b)} \Bigg)^2.
\eal
An application of  Lemma \ref{weights} yields
\bal
U_T &\ll \lam_1^{-4z_T} \lam_2^{-4k} \(T(\log T)^{\lam_1^2\lam_2^2-1} \)^2 \(\frac{N}{T} (\log N)^{\lam_2^2-1} (\log T)^{\lam_1^2 \lam_2^2 - \lam_2^2} \)^2 \\
&= \lam_1^{-4z_T} \lam_2^{-4k}  N^2 (\log N)^{2\lam_2^2-2} (\log T)^{4\lam_1^2\lam_2^2 - 2\lam_2^2-2}.
\eal
We optimize by taking $\lam_1^2=\frac12$ and $\lam_2^2 = \frac{1}{\log 4}$, so that
\[
U_T \ll \frac{N^2}{(\log N)^{2\theta} (\log T)}.
\]
Summing over $T=2^r \ll N^{1/2}$ yields
\[
E(B) \ll \frac{N^2 \log_2 N}{(\log N)^{2\theta}} \ll |B|^2 (\log_2 N)^4,
\]
using Lemma \ref{Bsize}.
\end{proof}

\begin{proof}[Proof of Proposition \ref{randomsubset}]
This is similar to the proof of  Proposition 3.2 of \cite{CRR}.  First, if elements of
$A$ are chosen from $B$ with probability $\rho$, then by easy first and second moment
calculations,
\[
\E |A| = \rho |B|, \qquad \E (|A|-\rho |B|)^2 = O(\rho |B|),
\]
where $\E$ denotes expectation.
By Chebyshev's inequality, $|A|\sim \rho |B|$ with probability
tending to 1 as $N\to \infty$.  By the proof of Proposition 3.2 of
 of \cite{CRR}, we also have
 \[
 \E |AA| = \sum_x \(1 - (1-\rho^2)^{\tau_B(x)/2}\) + O(\rho N),
 \]
 where 
 \[
 \tau_B(x) = | \{ x=b_1 b_2: b_1,b_2 \in B \} |.
 \]
 Now $(1-z)^k=1-kz+O((kz)^2)$ uniformly for $0\le z\le 1$ and $k\ge 1$, and so
 \bal
 \E |AA| &= (\rho^2/2) \sum_x \tau_B(x) + O\Big( \rho^4 \sum_x \tau^2_B(x) \Big) + O(\rho N) \\
 &= (\rho^2/2) |B|^2 + O( \rho^4 E(B)+\rho N )\\
 &= \(\frac12+o(1) \) (\rho|B|)^2.
 \eal
Since $|A|\sim \rho |B|$ with probability tending to 1 as $N\to\infty$, and also $|AA| \le \frac12 |A|(|A|+1)$ for all $|A|$, we conclude that $|AA| \sim \frac12 |A|^2$ with probability tending to 1 as $N\to \infty$.
\end{proof}


{\Large
\section{Proof of Theorem \ref{Prob1.4thresh}}}

Again let
\[
k = \fl{\frac{\log_2 N}{\log 4}}.
\]
Define
\[
A = \{ m\le N: \Omega(m) \le k + r \}, \quad r= 2\sqrt{\log_2 N \log_3 N}.
\]
By \eqref{lambda}, we have the size bound
\[
|A| \le \sum_{m\le N} \pfrac{1}{\log 4}^{\Omega(m)-(k+r)} \ll
 \frac{N(\log 4)^r}{(\log N)^\theta} \ll  \frac{N}{(\log N)^\theta} \exp\{
 (2/3) \sqrt{\log_2 N \log_3 N} \}
\]
using \eqref{theta}.
Next, we show that $|AA| \sim M_N$.  Let $B=[N]\setminus A$.  It suffices to show that
\[
|B[N]| \le |AB| + |BB| = o(M_N).
\]
Let $c=ab$, where $a\le N$ and $b\in B$, and 
consider two cases: (i) $\Omega(c) > 2k + h$, where $h=\fl{5\log_3 N}$, and (ii) $\Omega(c) < 2k+h$.  We then have $|B[N]| \le D_1+D_2$, where $D_1$ is the number of
integers $c\le N^2$ with $\Omega(c)>2k+h$, and $D_2$ is the number of pairs
$(a,b)\in [N]^2$ with $\Omega(ab)\le 2k+h$ and $\Omega(b)\ge k+r$.  We will show that each of these is small, essentially by exploiting the imbalance in 
prime factors of $a$ and $b$ implied in the conditions on $D_2$.
  By \eqref{lambda} and \eqref{theta},
\[
D_1\le \sum_{c\le N^2}\pfrac{1}{\log 2}^{\Omega(c)-(2k+h)}   
\ll \frac{N^2}{(\log N)^{2\theta} (1/\log 2)^h} = o(M_N),
\]
in light of estimate \eqref{NN}.  Next,
choose parameters $0<\lambda_2 < 1 < \lambda_1 < 1.9$. Then
\[
D_2\le \sum_{a,b\le N} \lam_2^{\Omega(ab)-(2k+h)} \lam_1^{\Omega(b)-(k+r)} \ll
\lam_1^{-(k+r)}\lam_2^{-(2k+h)} N^2 (\log N)^{\lam_2 + \lam_1\lam_2-2},
\]
invoking \eqref{lambda} again.  A near-optimal choice for the parameters is
\[
\lam_2 = \frac{1-x}{\log 4}, \quad \lam_1 = \frac{1+x}{1-x}, \quad x = \frac{r\log 4}{\log_2 N}.
\]
A little algebra reveals that the previous upper bound on $D_2$ is bounded by
\[
\ll N^2 (\log N)^{-2\theta-\frac{1}{\log 4}\( (1+x)\log(1+x) + (1-x)\log(1-x) \) - \frac{h}{\log_2 N}\log\frac{1-x}{\log 4} }.
\]
By Taylor's expansion,
\[
(1+x)\log(1+x) + (1-x)\log(1-x) \ge x^2 \quad (|x|<1)
\]
and therefore the exponent of $\log N$ is at most 
\[
-2\theta - \frac{x^2}{\log 4} + \frac{h \log\log (4+o(1))}{\log_2 N}
\le -2\theta - 4 \log 4\, \frac{\log_3 N}{\log_2 N} + 1.7 \, \frac{\log_3 N}{\log_2 N} \le -2\theta - 3.8 \, \frac{\log_3 N}{\log_2 N}.
\]
We get that $D_2 \ll N^2(\log N)^{-2\theta} (\log_2 N)^{-3.8} = o(M_N)$ 
and  Theorem \ref{Prob1.4thresh} follows.



\bibliography{prodset}
\bibliographystyle{alpha}

\end{document}